\tikzset{
	symbol/.style={
		draw=none,
		every to/.append style={
			edge node={node [sloped, allow upside down, auto=false]{$#1$}}}
	}
}
\begin{document}
	\pdfrender{StrokeColor=black,TextRenderingMode=2,LineWidth=0.2pt}	
	
	\title{On the implicit constant fields and key polynomials for valuation algebraic extensions}

	\author{Arpan Dutta}

	\def\NZQ{\mathbb}               
	\def\NN{{\NZQ N}}
	\def\QQ{{\NZQ Q}}
	\def\ZZ{{\NZQ Z}}
	\def\RR{{\NZQ R}}
	\def\CC{{\NZQ C}}
	\def\AA{{\NZQ A}}
	\def\BB{{\NZQ B}}
	\def\PP{{\NZQ P}}
	\def\FF{{\NZQ F}}
	\def\GG{{\NZQ G}}
	\def\HH{{\NZQ H}}
	\def\UU{{\NZQ U}}
	\def\P{\mathcal P}
	
	%
	%
	\let\union=\cup
	\let\sect=\cap
	\let\dirsum=\oplus
	\let\tensor=\otimes
	\let\iso=\cong
	\let\Union=\bigcup
	\let\Sect=\bigcap
	\let\Dirsum=\bigoplus
	\let\Tensor=\bigotimes
	
	\theoremstyle{plain}
	\newtheorem{Theorem}{Theorem}[section]
	\newtheorem{Lemma}[Theorem]{Lemma}
	\newtheorem{Corollary}[Theorem]{Corollary}
	\newtheorem{Proposition}[Theorem]{Proposition}
	\newtheorem{Problem}[Theorem]{}
	\newtheorem{Conjecture}[Theorem]{Conjecture}
	\newtheorem{Question}[Theorem]{Question}
	
	\theoremstyle{definition}
	\newtheorem{Example}[Theorem]{Example}
	\newtheorem{Examples}[Theorem]{Examples}
	\newtheorem{Definition}[Theorem]{Definition}
	
	\theoremstyle{remark}
	\newtheorem{Remark}[Theorem]{Remark}
	\newtheorem{Remarks}[Theorem]{Remarks}
	
	\newcommand{\trdeg}{\mbox{\rm trdeg}\,}
	\newcommand{\rr}{\mbox{\rm rat rk}\,}
	\newcommand{\sep}{\mathrm{sep}}
	\newcommand{\ac}{\mathrm{ac}}
	\newcommand{\ins}{\mathrm{ins}}
	\newcommand{\res}{\mathrm{res}}
	\newcommand{\Gal}{\mathrm{Gal}\,}
	\newcommand{\ch}{\mathrm{char}\,}
	\newcommand{\Aut}{\mathrm{Aut}\,}
	\newcommand{\kras}{\mathrm{kras}\,}
	\newcommand{\dist}{\mathrm{dist}\,}
	\newcommand{\ord}{\mathrm{ord}\,}
	
	\newcommand{\n}{\par\noindent}
	\newcommand{\nn}{\par\vskip2pt\noindent}
	\newcommand{\sn}{\par\smallskip\noindent}
	\newcommand{\mn}{\par\medskip\noindent}
	\newcommand{\bn}{\par\bigskip\noindent}
	\newcommand{\pars}{\par\smallskip}
	\newcommand{\parm}{\par\medskip}
	\newcommand{\parb}{\par\bigskip}
	\let\epsilon\varepsilon
	\let\phi=\varphi
	\let\kappa=\varkappa
	
	\def \a {\alpha}
	\def \b {\beta}
	\def \s {\sigma}
	\def \d {\delta}
	\def \g {\gamma}
	\def \o {\omega}
	\def \l {\lambda}
	\def \th {\theta}
	\def \D {\Delta}
	\def \G {\Gamma}
	\def \O {\Omega}
	\def \L {\Lambda}
	%
	%
	\textwidth=15cm \textheight=22cm \topmargin=0.5cm
	\oddsidemargin=0.5cm \evensidemargin=0.5cm \pagestyle{plain}

	\address{Department of Mathematics, IISER Mohali,
		Knowledge City, Sector 81, Manauli PO,
		SAS Nagar, Punjab, India, 140306.}
	\email{arpan.cmi@gmail.com}
	
	\date{\today}
	
	\thanks{This work was supported by the Post-Doctoral Fellowship of the National Board of Higher Mathematics, India.}
	
	\keywords{Valuation, minimal pairs, key polynomials, valuation algebraic extensions, ramification theory, implicit constant fields, extensions of valuation to rational function fields}
	
	\subjclass[2010]{12J20, 13A18, 12J25}	
	
	\maketitle
	
\begin{abstract}
	This article is a natural continuation of our previous works [\ref{Dutta min fields implicit const fields}] and [\ref{Dutta min pairs inertia ram deg impl const fields}]. In this article, we employ similar ideas as in [\ref{APZ all valns on K(X)}] to provide an estimate of $IC(K(X)|K,v)$ when $(K(X)|K,v)$ is a valuation algebraic extension. Our central result is an analogue of [\ref{Dutta min pairs inertia ram deg impl const fields}, Theorem 1.3]. We further provide a natural construction of a complete sequence of key polynomials for $v$ over $K$ in the setting of valuation algebraic extensions. 
\end{abstract}


	\section{Introduction}\label{Sec intro}
	Throughout this article, we assume that $(\overline{K}(X)|K,v)$ is an extension of valued fields, where $\overline{K}$ is a fixed algebraic closure of $K$ and $X$ is an indeterminate. The extension $(K(X)|K,v)$ satisfies the celebrated Abhyankar inequality:
	\begin{equation}\label{eqn Abh inequality}
		\rr vK(X)/vK + \trdeg[K(X)v:Kv] \leq 1,
	\end{equation}
	where $vK$ and $Kv$ denote respectively the value group and residue field of $(K,v)$ and $\rr vK(X)/vK$ is the $\QQ$-dimension of the divisible hull $\QQ\tensor_{\ZZ} (vK(X)/vK)$. The above inequality is a consequence of [\ref{Bourbaki}, Chapter VI, \S 10.3, Theorem 1]. The extension $(K(X)|K,v)$ is said to be \textbf{valuation algebraic} if we have strict inequality in (\ref{eqn Abh inequality}). Otherwise, it is said to be \textbf{valuation transcendental}. Moreover, the extension is said to be \textbf{value transcendental} if we have $\rr vK(X)/vK = 1$ and \textbf{residue transcendental} if $\trdeg[K(X)v:Kv]=1$.
	
	\pars Several different objects have been introduced to study extensions of valuations to rational function fields. Minimal pairs of definition (cf. Section \ref{Sec min pairs}) are central to the investigation of valuation transcendental extensions [cf. \ref{AP sur une classe}, \ref{APZ all valns on K(X)}, \ref{APZ characterization of residual trans extns}, \ref{APZ2 minimal pairs}]. The notion of implicit constant fields (cf. Section \ref{Sec implicit const fields}) was introduced by Kuhlmann in [\ref{Kuh value groups residue fields rational fn fields}] to construct 
	valuations with prescribed value groups and residue fields. The connections between these two notions were studied thoroughly in [\ref{Dutta min fields implicit const fields}] and [\ref{Dutta min pairs inertia ram deg impl const fields}]. In particular, it has been observed in [\ref{Dutta min pairs inertia ram deg impl const fields}, Theorem 1.3] that given a minimal pair of definition $(a,\g)$ for a valuation transcendental extension $(K(X)|K,v)$ and an extension of $v$ to $\overline{K(X)}$, we have
	\[ (K(a)\sect K^r)^h \subseteq IC(K(X)|K,v) \subseteq K(a)^h,  \]
	where $K^r$ denotes the absolute ramification field of $(K,v)$ and $K^h$ denotes the henselization of $K$ (cf. Section \ref{Sec ramification theory}).  
	
	\pars Valuation algebraic extensions have been explored in detail in [\ref{APZ all valns on K(X)}]. Following an idea of MacLane [\ref{MacLane key pols}], the authors show that any valuation algebraic extension can be expressed as a limit of residue transcendental extensions. We employ similar observations to estimate the implicit constant fields of valuation algebraic extensions. When $(K(X)|K,v)$ is valuation algebraic, we observe in Section \ref{Sec valn alg extns} that there exist sequences $\{\g_\nu\}_{\nu<\l} \subseteq v(X-\overline{K})$ and $\{a_\nu\}_{\nu<\l} \subseteq (\overline{K},v)$, where $v(X-\overline{K}) := \{ v(X-a) \mid a\in\overline{K} \}$ and $\l$ is some limit ordinal, satisfying the following conditions for all $\nu<\l$: 
	\begin{itemize}
		\item $\{\g_\nu\}_{\nu<\l}$ is a cofinal well-ordered subset of $v(X-\overline{K})$,
		\item $v(X-a_\nu) = \g_\nu$,
		\item $(a_\nu,\g_\nu)$ is a minimal pair of definition for $v_{a_\nu,\g_\nu}$ over $K$,
		\item $v(X-a_\nu) \geq v(X-\s a_\nu)$ for all $K$-conjugates $\s a_\nu$ of $a_\nu$.
	\end{itemize}  
	For each $\nu<\l$, we then choose $b_\nu\in\overline{K}$ such that $K(b_\nu) = K(a_\nu) \sect K^r$. After fixing an extension of $v$ to $\overline{K(X)}$, we obtain in Theorem \ref{Thm IC for valn alg} that 
	\[ K(b_\nu \mid\nu<\l)^h \subseteq IC(K(X)|K,v) \subseteq K(a_\nu \mid \nu<\l)^h. \]
	This observation is an analogue of Theorem 1.3 of [\ref{Dutta min pairs inertia ram deg impl const fields}], which in turn is a modification of Theorem 1.1 of [\ref{Dutta min fields implicit const fields}]. 
	
	\pars Key polynomials (cf. Section \ref{Sec key pols}) are fundamental objects which play a pivotal role in the problem of describing extensions of valuations to rational function fields. They were introduced by MacLane [\ref{MacLane key pols}] and later generalized by Vaqui\'{e} [\ref{Vaquie key pols}]. An alternative form of key polynomials, also known as abstract key polynomials, was given by Spivakovsky and Novacoski in [\ref{Nova Spiva key pol pseudo convergent}]. Throughout this article, we will work with abstract key polynomials and we will simply call them key polynomials for brevity. In Theorem \ref{Thm key pols valn alg} we obtain that when $(K(X)|K,v)$ is a valuation algebraic extension, then  
	\[ \text{$\{Q_\nu\}_{\nu<\l}$ forms a complete sequence of key polynomials for $v$ over $K$,} \]
	where $Q_\nu$ is the minimal polynomial of $a_\nu$ over $K$. A special case of this result is Theorem 1.2 of [\ref{Nova Spiva key pol pseudo convergent}].    

\section{Ramification Theory} \label{Sec ramification theory}	
	A valued field $(K,v)$ is said to be \textbf{henselian} if $v$ admits a unique extension to $\overline{K}$. Every valued field has a minimal separable-algebraic extension which is henselian. This extension is unique up to valuation preserving isomorphisms over $K$. We will call this extension the \textbf{henselization} of $(K,v)$ and denote it by $K^h$. Henselization is an \textbf{immediate} extension, that is, $vK^h = vK$ and $K^hv = Kv$. An algebraic extension of henselian valued fields is again henselian. For any algebraic extension $L$ of $K$, we have that $L^h = L.K^h$. 
	
	\pars We define the distinguished group:
	\[ G^r:= \{ \s \in \Gal(\overline{K}| K) \mid v(\s a - a) > va \text{ for all } a\in K^\sep \setminus \{0\} \}, \]
where $K^\sep$ denotes the separable-algebraic closure of $K$. The corresponding fixed field in $K^\sep$ will be denoted by $K^r$ and we will call it the \textbf{absolute ramification field} of $(K,v)$. It is well-known that $K^h\subseteq K^r$ and hence $K^r$ is a henselian field. Further, the extension $(K^r|K^h,v)$ is \textbf{defectless}, that is, for any tower of field extensions $K^h \subseteq L\subseteq L^\prime\subseteq K^r$ where $L^\prime|L$ is finite, we have
\[ [L^\prime:L] = (vL^\prime:vL)[L^\prime v:Lv]. \]

\begin{Lemma}\label{Lemma imm extn}
	Assume that $(K(y)|K,v)$ is an immediate extension. Then $v(y-K)$ does not admit a maximum. 
\end{Lemma}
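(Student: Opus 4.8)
The plan is to argue by contradiction, using both halves of the immediacy hypothesis: that $vK(y)=vK$ and that $K(y)v=Kv$. So suppose that $v(y-K)$ does admit a maximum, say $\delta := v(y-c)$ for some $c\in K$, with $v(y-c') \leq \delta$ for every $c'\in K$. The goal is to manufacture an element $c''\in K$ with $v(y-c'') > \delta$, contradicting maximality.

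First I would use $vK(y)=vK$ to locate a scaling factor. Since $\delta = v(y-c) \in vK(y) = vK$, there exists $d\in K$ with $vd=\delta$. Then the element $z := (y-c)/d \in K(y)$ has value $v(z)=0$, so its residue $\overline{z}$ is a nonzero element of the residue field $K(y)v$. Here is where the second half of immediacy enters: $K(y)v = Kv$, so $\overline{z} = \overline{e}$ for some $e\in K$ (which we may take with $v(e)=0$, though that is not essential).

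Now $v(z-e) > 0$ since $z$ and $e$ have the same nonzero residue. Multiplying back by $d$, this reads $v\bigl((y-c) - ed\bigr) > vd = \delta$, that is, $v\bigl(y-(c+ed)\bigr) > \delta$. Since $c+ed\in K$, the value $v\bigl(y-(c+ed)\bigr)$ belongs to $v(y-K)$ and strictly exceeds $\delta$, contradicting the assumed maximality of $\delta$. Hence $v(y-K)$ has no maximum.

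I do not anticipate a genuine obstacle here; the only point requiring care is to invoke immediacy twice and in the right order — once to guarantee $\delta\in vK$ so that the normalization $z=(y-c)/d$ makes sense, and once to guarantee that $\overline z$ is hit by the residue of an element of $K$. One should also note in passing that $y\notin K$ (otherwise $K(y)=K$ and the statement about $v(y-K)$ is vacuous or degenerate), which is implicit in the extension being nontrivial; if $y\in K$ then $v(y-y)=\infty$ and the claim fails, so the intended reading is $y\notin K$, consistent with how the lemma will be applied.
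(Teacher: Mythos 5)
Your proof is correct and follows essentially the same route as the paper's: assume a maximum $\delta=v(y-c)$, use $vK(y)=vK$ to normalize by some $d$ with $vd=\delta$, use $K(y)v=Kv$ to approximate the residue by an element of $K$, and thereby produce an element of $K$ closer to $y$, contradicting maximality. Your closing remark about the degenerate case $y\in K$ is a reasonable aside but not needed, since the lemma is only applied when $y=X$ is transcendental over $\overline{K}$.
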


\begin{proof}
	Suppose that $\a:= v(y-a) = \max v(y-K)$ for some $a\in K$. The fact that $(K(y)|K,v)$ is an immediate extension implies that $v(y-K) \subset vK$. Take $b\in K$ such that $vb = \a$. Then $v \frac{y-a}{b} = 0$ and $\frac{y-a}{b} v \in K(y)v = Kv$. Consequently, there exists $c\in K$ such that $v(\frac{y-a}{b} - c) > 0$, that is, $v(y-a-bc) > vb = \a$. The fact that $a+bc \in K$ then contradicts the maximality of $\a$.
\end{proof}


\section{Minimal Pairs of Definition} \label{Sec min pairs}

Take any $a\in\overline{K}$ and $\g$ in some ordered abelian group containing $v\overline{K}$. Any polynomial $f(X) \in \overline{K}[X]$ has a unique expression of the form $f(X) = \sum_{i=0}^{n} c_i (X-a)^i$ where $c_i \in \overline{K}$. Consider the map $v_{a,\g} : \overline{K}[X] \to v\overline{K} + \ZZ\g$ given by
\[ v_{a,\g} f:= \min \{ v c_i +i\g \}, \]
and extend $v_{a,\g}$ canonically to $\overline{K}(X)$. Then $v_{a,\g}$ is a valuation transcendental extension of $v$ from $\overline{K}$ to $\overline{K}(X)$ [\ref{Kuh value groups residue fields rational fn fields}, Theorem 3.11]. Further, $(\overline{K}(X)|\overline{K}, v_{a,\g})$ is value transcendental whenever $\g\notin v\overline{K}$. Otherwise, the extension is residue transcendental.  	

\pars It has further been observed in [\ref{Kuh value groups residue fields rational fn fields}, Theorem 3.11] that $(K(X)|K,v)$ is value (residue) transcendental if and only if $(\overline{K}(X)|\overline{K}, v)$ is also value (residue) transcendental, which in turn holds if and only if $v=v_{a,\g}$ for some $a\in\overline{K}$ and $\g\in v\overline{K}(X)$. Such a pair $(a,\g) \in \overline{K}\times v\overline{K}(X)$ is said to be a \textbf{pair of definition for $v$ over $K$}. A valuation transcendental extension can have multiple pairs of definition. It has been observed in [\ref{AP sur une classe}, Proposition 3] that $(a^\prime,\g)$ is also a pair of definition for $v$ over $K$ if and only if $v(a-a^\prime)\geq \g$. 

\begin{Definition}
	Assume that $(K(X)|K,v)$ is a valuation transcendental extension. A pair $(a,\g) \in \overline{K}\times v\overline{K}(X)$ is said to be a \textbf{minimal pair of definition for $v$ over $K$} if it satisfies the following conditions:
	\sn (MP1) $(a,\g)$ is a pair of definition for $v$ over $K$,
	\n (MP2)  $v(a-a^\prime)\geq \g \Longrightarrow [K(a^\prime):K] \geq [K(a):K]$ for all $a^\prime\in\overline{K}$. 
\end{Definition}
	In other words, $a$ is of minimal degree over $K$ with the property that $(a,\g)$ is a pair of definition for $v$ over $K$. It follows from [\ref{Kuh value groups residue fields rational fn fields}, Theorem 3.11] that a valuation transcendental extension always admits a pair of definition. The well-ordering principle then implies that a valuation transcendental extension always admits a minimal pair of definition.

\begin{Lemma}\label{Lemma inclusion of value grps and res fields}
	Assume that $(K(X)|K,v)$ is a valuation transcendental extension. Take a minimal pair of definition $(a,\g)$ and a pair of definition $(a^\prime,\g)$ for $v$ over $K$. Then,
	\[ vK(a)\subseteq vK(a^\prime) \text{ and } K(a)v \subseteq K(a^\prime)v. \]
\end{Lemma}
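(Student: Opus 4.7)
The plan is to reduce both inclusions to a single clean statement: for every $f\in K[X]$ with $\deg f<n:=[K(a):K]$ and $f(a)\neq 0$, one has $v(f(a))=v(f(a'))$ and, moreover, $v(f(a')-f(a))>v(f(a))$. Granted this, if $\alpha\in vK(a)$ I use $K(a)=K[a]$ to write $\alpha=v(f(a))$ for some such $f$, whence $\alpha=v(f(a'))\in vK(a')$; and if $\overline{b}\in K(a)v$ is written as $\overline{f(a)}$ with $v(f(a))=0$, then $\overline{b}=\overline{f(a')}\in K(a')v$ because the difference has positive value.

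The heart of the matter is to show that, writing $f(X)=\sum_{i=0}^{\deg f}c_i(X-a)^i$ with $c_i\in K(a)$, the minimum in $v_{a,\gamma}(f)=\min_i\{v(c_i)+i\gamma\}$ is strictly uniquely attained at $i=0$, i.e.\ $v(c_0)<v(c_i)+i\gamma$ for every $i\geq 1$. I would prove this by a Newton polygon argument: if equality $v(c_{i_0})+i_0\gamma=v(c_0)$ held for some $i_0\geq 1$, then since $v(c_i)\geq v(c_0)-i\gamma$ for all $i$ (the other inequalities in the min), all points $(i,v(c_i))$ of the Newton polygon of $g(Y):=\sum_ic_iY^i$ lie on or above the line of slope $-\gamma$ through $(0,v(c_0))$, and the polygon contains a genuine segment of slope $-\gamma$ ending at $i_0\geq 1$. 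This segment produces a root $\beta\in\overline{K}$ of $g$ with $v(\beta)=\gamma$, so $a+\beta$ is a root of $f\in K[X]$ satisfying $v\bigl((a+\beta)-a\bigr)=\gamma\geq\gamma$. But then MP2 forces $[K(a+\beta):K]\geq n$, whereas $[K(a+\beta):K]\leq\deg f<n$, a contradiction. This Newton polygon step is the main obstacle.

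Once strict inequality is in hand, the rest is immediate. The observation from \cite{AP sur une classe} recalled before the lemma (i.e.\ the characterization of pairs of definition) gives $v(a'-a)\geq\gamma$, hence
\[
v(f(a')-f(a))=v\Bigl(\sum_{i\geq 1}c_i(a'-a)^i\Bigr)\geq\min_{i\geq 1}\{v(c_i)+iv(a'-a)\}\geq\min_{i\geq 1}\{v(c_i)+i\gamma\}>v(c_0)=v(f(a)),
\]
which yields both $v(f(a'))=v(f(a))$ and $\overline{f(a')}=\overline{f(a)}$ whenever $v(f(a))=0$. Applying this to polynomials $f\in K[X]_{<n}$ realizing arbitrary values and arbitrary residues of $K(a)$ completes the two inclusions.
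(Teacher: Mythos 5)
Your proposal is correct in substance, and it takes a genuinely different route from the paper. The paper's proof is structural: in the value transcendental case it quotes $vK(X)=vK(a)\oplus\ZZ vQ$ and $K(X)v=K(a)v$ from [\ref{Dutta min fields implicit const fields}, Remark 3.3], applies the analogous description over $K(a^\prime)$ (where $(a^\prime,\gamma)$ is trivially a minimal pair), and uses that $\gamma$ is non-torsion modulo $vK$ to split off the $\ZZ\gamma$-part; the residue transcendental case is then reduced to the value transcendental one by replacing $\gamma$ with $\Gamma=(\gamma,-1)$ in $(v\overline{K}\oplus\ZZ)_{\mathrm{lex}}$. You instead argue elementwise: the key claim that for $f\in K[X]$ with $\deg f<[K(a):K]$ one has $v(c_0)<v(c_i)+i\gamma$ for all $i\geq 1$ (equivalently, $v(f(a))=v_{a,\gamma}f$ with the minimum uniquely attained at $i=0$) is the classical minimal-pair lemma of Alexandru--Popescu--Zaharescu, and combined with $v(a-a^\prime)\geq\gamma$ from [\ref{AP sur une classe}, Proposition 3] it yields both inclusions at once, uniformly in the value and residue transcendental cases and without invoking the structure theorems. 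That buys self-containedness and is close in spirit to the factorization argument of Lemma \ref{Lemma vf leq v (a,g) f}.

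One step of your write-up needs repair. You only exclude the equality $v(c_{i_0})+i_0\gamma=v(c_0)$, and in doing so you assume \enquote{the other inequalities in the min}, i.e. $v(c_i)\geq v(c_0)-i\gamma$ for all $i$ --- but this is part of what is being proved, and the negation of your claim also allows $v(c_{i_0})+i_0\gamma<v(c_0)$; note that in the value transcendental case, where $\gamma\notin v\overline{K}$, equality is impossible and only the strict case can occur, so as written the argument says nothing there. The fix needs no new idea: if the minimum of $v(c_i)+i\gamma$ is attained at some $i_0\geq 1$, the Newton polygon of $g(Y)=f(a+Y)$ has a slope $\leq -\gamma$, hence $g$ has a root $\beta$ with $v(\beta)\geq\gamma$ (not necessarily $=\gamma$); then $a+\beta$ is a root of $f$ with $v\bigl((a+\beta)-a\bigr)\geq\gamma$, so (MP2) gives $[K(a+\beta):K]\geq[K(a):K]>\deg f$, a contradiction. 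Alternatively, avoid the polygon altogether: every root $z$ of $f$ satisfies $v(a-z)<\gamma$ by the same minimality argument, and writing the $c_i$ as elementary symmetric functions of the $z-a$ gives $v(c_i)\geq v(c_0)-\bigl(\text{sum of the $i$ largest } v(a-z)\bigr)>v(c_0)-i\gamma$ directly, with no case distinction.
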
	
	
	\begin{proof}
		We first assume that $(K(X)|K,v)$ is a value transcendental extension. It follows from [\ref{Dutta min fields implicit const fields}, Remark 3.3] that 
		\[ vK(X) = vK(a)\dirsum\ZZ vQ, \]
		where $Q(X)\in K[X]$ is the minimal polynomial of $a$ over $K$. Moreover, observe that $(a^\prime,\g)$ is a minimal pair of definition for $v$ over $K(a^\prime)$. As a consequence, 
		\[ vK(a^\prime, X) = vK(a^\prime) \dirsum \ZZ\g. \]
		Take any $\a\in vK(X)$. The fact that $vK(X) \subseteq vK(a^\prime,X)$ then implies that $\a = \a^\prime + n\g$ for some $\a^\prime \in vK(a^\prime)$ and $n\in\ZZ$. Since $\g$ is not a torsion element modulo $vK$, it follows that $n=0$ and consequently, $\a = \a^\prime$. We have thus obtained that 
		\[ vK(a)\subseteq vK(a^\prime). \]
		It further follows from [\ref{Dutta min fields implicit const fields}, Remark 3.3] that 
		\[ K(a) v = K(X)v \subseteq K(a^\prime,X)v = K(a^\prime)v. \]
		
		\pars We now assume that $(K(X)|K,v)$ is a residue transcendental extension. Then $\g \in v\overline{K}$. Take the ordered abelian group $v\overline{K}\dirsum\ZZ$ endowed with the lexicographic order. Embed $v\overline{K}$ into $(v\overline{K}\dirsum\ZZ)_{\text{lex}}$ by setting 
		\[ \a \mapsto (\a,0) \text{ for all } \a\in v\overline{K}. \] 
		Set
		\[ \G:= (\g,-1). \]
		Take the extension $w:= v_{a,\G}$ of $v$ from $\overline{K}$ to $\overline{K}(X)$. Observe that,
		\[ v(a-b) \geq \G \Longleftrightarrow v(a-b) \geq \g \text{ for all } b\in\overline{K}. \]
		It follows that $(a,\G)$ is a minimal pair of definition and $(a^\prime,\G)$ is a pair of definition for $w$ over $K$. The fact that $\G\notin v\overline{K}$ implies that $(K(X)|K,w)$ is a value transcendental extension. It now follows from our preceding discussions that
		\[ vK(a)\subseteq vK(a^\prime) \text{ and } K(a)v \subseteq K(a^\prime)v. \]
	\end{proof}
	

\section{Pseudo-Cauchy Sequences}\label{Sec pCs}
 A well-ordered set $\{z_\nu\}_{\nu<\l}$ in $(K,v)$, where $\l$ is some limit ordinal, is said to form a \textbf{pseudo-Cauchy sequence (pCs)} if it satisfies the following condition:
 \[ v(z_{\nu_1} - z_{\nu_2}) < v(z_{\nu_2} - z_{\nu_3}) \text{ for all } \nu_1 < \nu_2 < \nu_3 < \l. \]
Given a polynomial $f(X)\in K[X]$ and a pCs $\{z_\nu\}_{\nu<\l}$ in $(K,v)$, the sequence $\{ vf(z_\nu) \}_{\nu<\l}$ is either ultimately constant, or ultimately monotonically increasing [\ref{Kaplansky}, Lemma 5]. We say that $\{z_\nu\}_{\nu<\l}$ is a \textbf{pCs of transcendental type} if $\{ vf(z_\nu) \}_{\nu<\l}$ is ultimately constant for all $f(X) \in K[X]$. Otherwise, we say that $\{z_\nu\}_{\nu<\l}$ is a \textbf{pCs of algebraic type}.

\pars Observe that $v (z_\nu - z_{\nu +1}) = v(z_\nu - z_\mu)$ for all $\nu<\mu<\l$. Set $\g_\nu:= v (z_\nu - z_{\nu +1})$. An element $\ell$ in some valued field extension $(L,v)$ of $(K,v)$ is said to be a \textbf{limit} of $\{z_\nu\}_{\nu<\l}$ if 
\[ v(\ell - z_\nu) = \g_\nu \text{ for all } \nu<\l. \]
If $\ell^\prime \in L$ is another limit of $\{z_\nu\}_{\nu<\l}$, then it follows from the triangle inequality that $v(\ell-\ell^\prime) \geq \g_\nu$ for all $\nu<\l$. The fact that $\{\g_\nu\}_{\nu<\l}$ is a monotonically increasing sequence without a final element then implies the following:

\begin{Lemma}\label{Lemma pCs limits}
	Assume that $\{z_\nu\}_{\nu<\l}$ is a pCs in a valued field $(K,v)$. Take an extension $(L|K,v)$ and assume that $\ell\in L$ is a limit of $\{z_\nu\}_{\nu<\l}$. Then $\ell^\prime\in L$ is also a limit of $\{z_\nu\}_{\nu<\l}$ if and only if 
	\[ v(\ell - \ell^\prime) > \g_\nu \text{ for all } \nu<\l. \]
\end{Lemma}

\begin{Definition}
	An extension $(K(X)|K,v)$ is said to be a \textbf{pure extension} if it satisfies one of the following conditions:
	\sn (PE1) $v(X-a)$ is not a torsion element modulo $vK$ for some $a\in K$, 
	\n (PE2) $v b(X-a) = 0$ and $b(X-a)v$ is transcendental over $Kv$ for some $a,b\in K$,
	\n (PE3) $X$ is a limit of some pCs of transcendental type in $(K,v)$.  
\end{Definition}
If $(K(X)|K,v)$ is a pure extension satisfying (PE3), then it follows from [\ref{Kaplansky}, Theorem 2] that $(K(X)|K,v)$ is immediate.

\section{Valuation Algebraic Extensions} \label{Sec valn alg extns}

Assume that $(K(X)|K,v)$ is a valuation algebraic extension. Then $(\overline{K}(X)|\overline{K}, v)$ is an immediate extension [\ref{Kuh value groups residue fields rational fn fields}, Lemma 3.3]. It follows from Lemma \ref{Lemma imm extn} that $v(X-\overline{K})$ does not admit a maximum. Take a cofinal well-ordered subset $\{\d_\nu\}_{\nu<\l}$ of $v(X-\overline{K})$, where $\l$ is some limit ordinal. Further, for each $\nu<\l$, take $d_\nu \in \overline{K}$ such that $v(X-d_\nu) = \d_\nu$.

\pars Take a minimal pair of definition $(a,\d_1)$ of $v_{d_1, \d_1}$ over $K$. Then $v(a - d_1) \geq \d_1$ and hence $v(X-a) \geq \d_1$. Set $\g:= v(X-a)$. By definition, $(a, \g)$ is a pair of definition for $v_{a,\g}$ over $K$. Take another pair of definition $(a^\prime, \g)$ for $v_{a,\g}$ over $K$. Then $v(a - a^\prime) \geq \g \geq \d_1$ and hence $(a^\prime, \d_1)$ is a pair of definition for $v_{d_1,\d_1}$ over $K$. The minimality of $(a,\d_1)$ then implies that $[K(a):K] \leq [K(a^\prime):K]$. It follows that $(a, \g)$ is a minimal pair of definition for $v_{a,\g}$ over $K$. If $a\notin K$, assume that $\tau a \neq a$ is a conjugate of $a$ over $K$ such that $v(X-\tau a) \geq v(X-a)$. It follows that $v(a-\tau a) \geq v(X-a) \geq \d_1$ and hence $(\tau a, \d_1)$ is also a minimal pair of definition for $v_{d_1, \d_1}$ over $K$. Replacing $a$ by $\tau a$ and employing the same arguments, we can thus further assume that $v(X-a) \geq v(X-\s a)$ for all $K$-conjugates $\s a$ of $a$. Set 
\[ a_1:= a \text{ and } \g_1 := \g. \]
Assume that we have obtained some $(a_\nu, \g_\nu)$. The fact that $\{\d_\nu\}_{\nu<\l}$ is a well-ordered cofinal subset of $v(X-\overline{K})$ implies that there exists some minimal $\mu<\l$ such that $\g_\nu < \d_\mu$. We can then construct a pair $(a^\prime, \g^\prime)$ such that $v(X-a^\prime) = \g^\prime \geq \d_\mu$, $(a^\prime,\g^\prime)$ is a minimal pair of definition for $v_{a^\prime, \g^\prime}$ over $K$ and $v(X-a^\prime) \geq v(X-\s a^\prime)$ for all $K$-conjugates $\s a^\prime$ of $a^\prime$. We set 
\[ a_{\nu+1} := a^\prime \text{ and } \g_{\nu+1}:= \g^\prime. \]
Observe that $\d_\nu \leq \g_\nu$ for all $\nu<\l$. Consequently, the fact that $\{\d_\nu\}_{\nu<\l}$ is a cofinal well-ordered subset of $v(X-\overline{K})$ implies that $\{\g_\nu\}_{\nu<\l}$ is a cofinal well-ordered subset of $v(X-\overline{K})$. We have thus constructed sequences $\{\g_\nu\}_{\nu<\l}$ and $\{ a_\nu \}_{\nu<\l}$ satisfying the following conditions for all $\nu<\l$:
\begin{itemize}
	\item $\{\g_\nu\}_{\nu<\l}$ is a cofinal well-ordered subset of $v(X-\overline{K})$,
	\item $v(X-a_\nu) = \g_\nu$,
	\item $(a_\nu,\g_\nu)$ is a minimal pair of definition for $v_{a_\nu,\g_\nu}$ over $K$,
	\item $v(X-a_\nu) \geq v(X-\s a_\nu)$ for all $K$-conjugates $\s a_\nu$ of $a_\nu$.
\end{itemize}
We fix this choice of $\{\g_\nu\}_{\nu<\l}$ and $\{a_\nu\}_{\nu<\l}$ \textit{for the rest of this article}. From [\ref{APZ all valns on K(X)}, Theorem 2.3], we obtain that 
\[ vK(a_\nu) \subseteq vK(a_\mu) \text{ and } K(a_\nu)v \subseteq K(a_\mu)v \text{ for all } \nu<\mu<\l. \]
Further, it follows from [\ref{APZ all valns on K(X)}, Theorem 5.1] that 
	\[ vK(X) = \Union\limits_{\nu<\l} vK(a_\nu) \text{ and } K(X)v = \Union\limits_{\nu<\l} K(a_\nu)v. \]
	
By definition, $\{a_\nu\}_{\nu<\l}$ is a pCs in $(K(a_\nu\mid \nu<\l), v)$ with $X$ as a limit. Suppose that $\{a_\nu\}_{\nu<\l}$ is a pCs of algebraic type. In light of [\ref{Kaplansky}, Theorem 3], there exists some $\ell \in \overline{K}$ such that $\ell$ is also a limit of $\{a_\nu\}_{\nu<\l}$. It then follows from Lemma \ref{Lemma pCs limits} that 
\[ v(X-\ell) > \g_\nu \text{ for all } \nu<\l. \]
However, this contradicts the fact that $\{\g_\nu\}_{\nu<\l}$ is a cofinal subset of $v(X-\overline{K})$. As a consequence, $\{a_\nu\}_{\nu<\l}$ is a pCs of transcendental type in $(K(a_\nu \mid \nu<\l), v)$.


\section{Implicit Constant Fields} \label{Sec implicit const fields}
 Fix an extension of $v$ to $\overline{K(X)}$. The \textbf{implicit constant field} of the extension $(K(X)|K,v)$ is defined as 
 \[ IC(K(X)|K,v) := \overline{K}\sect K(X)^h. \]
 Henselization being a separable-algebraic extension implies that $IC(K(X)|K,v)$ is a separable-algebraic extension of $K$. Moreover, the fact that $K^h \subseteq K(X)^h$ implies that $K^h \subseteq IC(K(X)|K,v)$. Consequently, $(IC(K(X)|K,v), v)$ is a henselian valued field.  

\begin{Lemma}\label{Lemma equality of value grps and res fields}
	Assume that $(K(X)|K,v)$ is a valuation transcendental extension. Take a minimal pair of definition $(a,\g)$ and a pair of definition $(a^\prime,\g)$ for $v$ over $K$. Take $b\in\overline{K}$ such that $K(b) = K(a)\sect K^r$. Then 
	\[ vK(b, a^\prime) = vK(a^\prime) \text{ and } K(b, a^\prime)v = K(a^\prime)v. \]
\end{Lemma}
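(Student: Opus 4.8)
The plan is to reduce the statement to the immediateness of one extension of henselian valued fields, and then to invoke the good behaviour of tame extensions under base change. Write $L := K(a')^h$. Since $b\in K(b) = K(a)\sect K^r \subseteq K(a)$ and $K^h(b) = K(b).K^h$ is the (immediate) henselization of $K(b)$, Lemma \ref{Lemma inclusion of value grps and res fields} gives
\[ vK^h(b) = vK(b) \subseteq vK(a)\subseteq vK(a') = vL \quad\text{and}\quad K^h(b)v = K(b)v \subseteq K(a')v = Lv. \]
Moreover, using $M^h = M.K^h$ for algebraic extensions $M|K$, we have $K(b,a')^h = K(b,a').K^h = \big(K(a').K^h\big).K(b) = K(a')^h(b) = L(b)$. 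As henselizations are immediate, it therefore suffices to show that $L(b)|L$ is an immediate extension; indeed this would give $vK(b,a') = vK(b,a')^h = vL(b) = vL = vK(a')^h = vK(a')$, and likewise for the residue fields.

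To prove $L(b)|L$ immediate I would use that $b\in K^r$, so that $K^h(b)|K^h$ is a finite \emph{tame} extension; recall that $K^r|K^h$ is not merely defectless but tame (equivalently, $K^r$ is the tame closure of $K^h$). The key input is the classical fact that tameness is preserved under base change along an extension of henselian valued fields, with the value group and residue field of the compositum being the obvious ones: applied to $K^h(b)|K^h$ and the henselian extension $L|K^h$ this yields $vL(b) = vL + vK^h(b)$ and $L(b)v = Lv\cdot K^h(b)v$. By the two inclusions recorded above, the right-hand sides are just $vL$ and $Lv$, so $L(b)|L$ is immediate.

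The step requiring the most care is this ramification-theoretic input: that $K^r$ is a tame extension of $K^h$, and that for a finite tame extension $F|K^h$ and any henselian extension $L|K^h$ one has $v(L.F) = vL + vF$ and $(L.F)v = (Lv)(Fv)$. Both are standard; in the write-up I would either cite a reference for the theory of tame extensions or insert a short lemma deducing them from the definition of $G^r$. Everything else is bookkeeping with henselizations, using only that they are immediate and satisfy $M^h = M.K^h$ for $M|K$ algebraic.
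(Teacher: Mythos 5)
Your proof is correct, but it takes a genuinely different route from the paper's. The paper argues via implicit constant fields: by [\ref{Dutta min pairs inertia ram deg impl const fields}, Theorem 1.3] one has $K(b)^h \subseteq IC(K(X)|K,v)$, while $IC(K(X)|K,v) \subseteq IC(K(a^\prime,X)|K(a^\prime),v) = K(a^\prime)^h$ by [\ref{Dutta min fields implicit const fields}, Lemma 5.1]; hence $b \in K(a^\prime)^h$, so $K(b,a^\prime)^h = K(a^\prime)^h$, and the immediateness of henselization finishes the proof. You instead combine Lemma \ref{Lemma inclusion of value grps and res fields} (which gives $vK(b)\subseteq vK(a)\subseteq vK(a^\prime)$ and likewise for residue fields) with the base-change property of tame extensions. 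Your ``key input'' requiring care is exactly [\ref{Dutta Kuh abhyankars lemma}, Theorem 3(1)], which the paper itself invokes twice in the proof of Theorem \ref{Thm IC for valn alg}, so it is already among the paper's tools and you can simply cite it; in fact that theorem applies to arbitrary valued field extensions, so you could apply it directly to $K(a^\prime)$ and $K(b)\subseteq K^r$ and skip the bookkeeping with henselizations altogether. As for what each approach buys: the paper's argument is shorter on the page but leans on the deeper Theorem 1.3 of [\ref{Dutta min pairs inertia ram deg impl const fields}] and directly yields the stronger conclusion $b\in K(a^\prime)^h$; yours is more self-contained relative to the present paper, resting only on Lemma \ref{Lemma inclusion of value grps and res fields} and the ramification-theoretic base change. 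Note that your argument recovers $b\in K(a^\prime)^h$ too, since $L(b)|L$ is tame, hence defectless, and a finite defectless immediate extension is trivial.
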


\begin{proof}
	Fix an extension of $v$ to $\overline{K(X)}$. We have observed in [\ref{Dutta min pairs inertia ram deg impl const fields}, Theorem 1.3] that $K(b)^h \subseteq IC(K(X)|K,v)$. Observe that $IC(K(X)|K,v) \subseteq IC(K(a^\prime, X)|K(a^\prime), v)$ by definition. It has been observed in [\ref{Dutta min fields implicit const fields}, Lemma 5.1] that $IC(K(a^\prime, X)|K(a^\prime),v) = K(a^\prime)^h$. It then follows that $K(b)^h \subseteq K(a^\prime)^h$. In particular, $b\in K(a^\prime)^h $ and consequently, 
	\[ K(b, a^\prime)^h  = K(a^\prime)^h. \]
	The fact that the henselization is an immediate extension now yields the lemma. 
\end{proof}

We are now ready to provide an estimate of $IC(K(X)|K,v)$ when the extension $(K(X)|K,v)$ is valuation algebraic.

\begin{Theorem}\label{Thm IC for valn alg}
	Assume that $(K(X)|K,v)$ is a valuation algebraic extension. Fix an extension of $v$ to $\overline{K(X)}$. Take $\{\g_\nu\}_{\nu<\l}$ and $\{a_\nu\}_{\nu<\l}$ as in Section \ref{Sec valn alg extns}. For each $\nu<\l$, take $b_\nu\in\overline{K}$ such that 
	\[ K(b_\nu) = K(a_\nu) \sect K^r. \]
	Then,
	\begin{equation}
		K(b_\nu \mid\nu<\l)^h \subseteq IC(K(X)|K,v) \subseteq K(a_\nu \mid \nu<\l)^h.
	\end{equation}
\end{Theorem}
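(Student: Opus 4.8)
The plan is to prove the two inclusions separately, in each case transferring information from the intermediate fields $K(a_\nu)$ to $K(X)$ by exploiting that $\{a_\nu\}_{\nu<\l}$ is a pseudo-Cauchy sequence of transcendental type having $X$ as a limit. For the inclusion $IC(K(X)|K,v)\subseteq K(a_\nu\mid\nu<\l)^h$, put $L:=K(a_\nu\mid\nu<\l)$. By Section \ref{Sec valn alg extns}, $\{a_\nu\}_{\nu<\l}$ is a pCs of transcendental type in $(L,v)$ with $X$ as a limit, so $(L(X)|L,v)$ is a pure extension of type (PE3), hence immediate. With the fixed extension of $v$ to $\overline{K(X)}$ we have $K(X)^h\subseteq L(X)^h$, whence $IC(K(X)|K,v)=\overline K\sect K(X)^h\subseteq\overline K\sect L(X)^h=IC(L(X)|L,v)$; thus it suffices to prove $IC(L(X)|L,v)=L^h$, the analogue for pure extensions of [\ref{Dutta min fields implicit const fields}, Lemma 5.1]. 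Using that $L(X)^h$ and $L^h(X)^h$ both equal the henselization of $L^h(X)$, and $\overline L=\overline{L^h}$, one reduces to $L=L^h$. Let $y\in\overline L\sect L(X)^h$ and $d:=[L(y):L]$. As $\{a_\nu\}_{\nu<\l}$ has no limit in $\overline K$, it remains of transcendental type over $L(y)$, so $(L(y)(X)|L(y),v)$ is again immediate; hence $vL(X,y)=vL(y)$, $L(X,y)v=L(y)v$, while $vL(X)=vL$, $L(X)v=Lv$. Since the defect is invariant under henselization, the fundamental equality now gives $[L(X,y)^h:L(X)^h]=[L(y):L]=d$; but $y\in L(X)^h$ forces $L(X,y)^h=L(X)^h$, so $d=1$ and $y\in L$. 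Therefore $IC(K(X)|K,v)\subseteq L^h=K(a_\nu\mid\nu<\l)^h$.

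For the inclusion $K(b_\nu\mid\nu<\l)^h\subseteq IC(K(X)|K,v)$, since $IC(K(X)|K,v)$ is henselian it is enough to show $b_\nu\in K(X)^h$ for each $\nu<\l$. Fix $\nu$; I first claim $vK(b_\nu,X)=vK(X)$ and $K(b_\nu,X)v=K(X)v$. Every element of $K(b_\nu)(X)$ has the form $f(X)/g(X)$ with $f,g\in K(b_\nu)[T]$, and since $X$ is a limit of $\{a_\mu\}_{\mu<\l}$, which is of transcendental type over $K(b_\nu)$, the standard behaviour of such sequences [\ref{Kaplansky}] gives $vf(X)=vf(a_\mu)$, and $\overline{f(X)/g(X)}=\overline{f(a_\mu)/g(a_\mu)}$ when $vf(X)=vg(X)$, for all large $\mu$. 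For $\mu>\nu$ one has $v(a_\nu-a_\mu)=\g_\nu$, so $(a_\mu,\g_\nu)$ is a pair of definition for $v_{a_\nu,\g_\nu}$ over $K$; Lemma \ref{Lemma equality of value grps and res fields}, applied to the minimal pair $(a_\nu,\g_\nu)$, this pair, and $b_\nu$, yields $vK(b_\nu,a_\mu)=vK(a_\mu)$ and $K(b_\nu,a_\mu)v=K(a_\mu)v$, so $vf(a_\mu)\in vK(a_\mu)\subseteq vK(X)$ and $\overline{f(a_\mu)/g(a_\mu)}\in K(a_\mu)v\subseteq K(X)v$, the final inclusions being those recorded in Section \ref{Sec valn alg extns}; this proves the claim. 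Finally $b_\nu\in K^r$ gives $K(X,b_\nu)^h\subseteq K^r\!\cdot\!K(X)^h\subseteq K(X)^r$, which is defectless over $K(X)^h$, so the fundamental equality together with the claim yields $[K(X)^h(b_\nu):K(X)^h]=1$, i.e. $b_\nu\in K(X)^h\sect\overline K=IC(K(X)|K,v)$.

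I expect the second inclusion to be the main obstacle. The naive attempt — placing $b_\nu$, or even $a_\nu$, directly into $K(X)^h$ via Krasner's lemma — fails when there is wild ramification, since then $v(X-a_\nu)$ need not exceed $v(a_\nu-\s a_\nu)$ for all conjugates. The two points that make the argument work are: $b_\nu\in K^r$ forces the defect of $K(X,b_\nu)$ over $K(X)$ to vanish, so only the value group and residue field of $K(b_\nu)(X)$ have to be controlled; and these are controlled by evaluating polynomials along $\{a_\nu\}_{\nu<\l}$, using Lemma \ref{Lemma equality of value grps and res fields} and the identities $vK(X)=\Union_{\nu<\l}vK(a_\nu)$, $K(X)v=\Union_{\nu<\l}K(a_\nu)v$. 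On the first inclusion the analogous subtlety is that purity of type (PE3) — equivalently, the absence of an algebraic limit of $\{a_\nu\}_{\nu<\l}$ — is precisely what propagates immediacy to $(L(y)(X)|L(y),v)$ and thereby pins $IC(L(X)|L,v)$ down to $L^h$.
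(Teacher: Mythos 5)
Your overall architecture matches the paper's: the upper bound comes from $IC(K(X)|K,v)\subseteq IC(L(X)|L,v)=L^h$ for $L:=K(a_\nu\mid\nu<\lambda)$ via purity of type (PE3), and the lower bound comes from showing that $(K(b_\nu,X)|K(X),v)$ is immediate and then collapsing $K(b_\nu,X)^h|K(X)^h$ using that it sits inside the defectless extension $K(X)^r|K(X)^h$. Where you genuinely diverge is in how you prove the immediacy of $(K(b_\nu,X)|K(X),v)$. The paper reruns the construction of Section \ref{Sec valn alg extns} over the base $K(b_{\nu_0})$ to produce an auxiliary system of minimal pairs $(c_\nu,\beta_\nu)$, identifies $K(b_{\nu_0},X)v$ with $\bigcup_\nu K(b_{\nu_0},c_\nu)v$ via [APZ, Theorems 2.3 and 5.1] over that base, and then funnels each $K(b_{\nu_0},c_\nu)v$ into some $K(a_\mu)v$ using Lemma \ref{Lemma inclusion of value grps and res fields} and Lemma \ref{Lemma equality of value grps and res fields}; the value group equality is delegated to [DuKu, Theorem 3(1)]. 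You instead evaluate an arbitrary $f(X)/g(X)$ with $f,g\in K(b_\nu)[T]$ along the pCs $\{a_\mu\}$, use Kaplansky's results for sequences of transcendental type to reduce its value and residue to those of $f(a_\mu)/g(a_\mu)\in K(b_\nu,a_\mu)$ for large $\mu$, and then apply Lemma \ref{Lemma equality of value grps and res fields} to get $vK(b_\nu,a_\mu)=vK(a_\mu)\subseteq vK(X)$ and $K(b_\nu,a_\mu)v=K(a_\mu)v\subseteq K(X)v$. This is correct --- the sequence stays of transcendental type over any algebraic extension because it has no limit in $\overline{K}$, by the cofinality argument of Section \ref{Sec valn alg extns} together with Kaplansky's Theorem 3 --- and it is arguably more economical, treating value group and residue field uniformly and dispensing with the auxiliary sequence $\{c_\nu\}$. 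The only imprecision is that Kaplansky's lemmas should be invoked over $K(b_\nu)(a_\mu\mid\mu<\lambda)$, the field actually containing the sequence, rather than over $K(b_\nu)$.

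The one real soft spot is your rederivation of $IC(L(X)|L,v)=L^h$ for the pure extension. After reducing to $L$ henselian, the step ``the fundamental equality now gives $[L(X,y)^h:L(X)^h]=[L(y):L]$'' is not justified: what your value-group and residue-field computation actually yields is that $(L(y)|L,v)$ is immediate once $y\in L(X)^h$, and an immediate algebraic extension of a henselian field need not be trivial (defect). Asserting $[L(X,y)^h:L(X)^h]=[L(y):L]$ amounts to asserting that the minimal polynomial of $y$ over $L$ remains irreducible over $L(X)^h$, which is essentially the statement being proved; ``invariance of the defect under henselization'' does not supply the missing fact that the defect of $L(y)|L$ survives the base change to $L(X)$. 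The paper sidesteps this entirely by citing [\ref{Kuh value groups residue fields rational fn fields}, Lemma 3.7], and you should do the same (or reproduce that lemma's actual argument); with that citation your first inclusion coincides with the paper's.
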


\begin{proof}
	We have observed in Section \ref{Sec valn alg extns} that $\{a_\nu\}_{\nu<\l}$ is a pCs of transcendental type in $(L,v)$ with limit $X$, where $L:= K(a_\nu\mid \nu<\l)$. Thus $(L(X)|L,v)$ is a pure extension by definition. It follows from [\ref{Kuh value groups residue fields rational fn fields}, Lemma 3.7] that $IC(L(X)|L,v) = L^h$. Consequently, 
	\[ IC(K(X)|K,v) \subseteq IC(L(X)|L,v) = L^h = K(a_\nu\mid \nu<\l)^h. \]
	
	\parm Take some $\nu_0<\l$. Observe that $K(b_{\nu_0}) = K(a_{\nu_0})\sect K^r$ and hence $vK(b_{\nu_0}) \subseteq vK(a_{\nu_0})$. It follows from [\ref{APZ all valns on K(X)}, Theorem 2.3] and [\ref{APZ all valns on K(X)}, Theorem 5.1] that $vK(X) = \Union\limits_{\nu<\l} vK(a_\nu)$. Consequently, 
	\[ vK(b_{\nu_0}) \subseteq vK(a_{\nu_0}) \subseteq vK(X). \]
	We now obtain from [\ref{Dutta Kuh abhyankars lemma}, Theorem 3(1)] that 
	\begin{equation}\label{eqn vK(b,X) = vK(X)}
		vK(b_{\nu_0}, X) = vK(X).
	\end{equation} 
Following the same arguments as in Section \ref{Sec valn alg extns}, we construct sequences $\{\b_\nu\}_{\nu<\l}$ and $\{c_\nu\}_{\nu<\l}$ satisfying the following conditions for all $\nu<\l$:
\begin{itemize}
	\item $\g_\nu \leq \b_\nu = v(X-c_\nu)$,
	\item $(c_\nu, \b_\nu)$ is a minimal pair of definition for $v_{c_\nu,\b_\nu}$ over $K(b_{\nu_0})$.
\end{itemize}
It follows from [\ref{APZ all valns on K(X)}, Theorem 2.3] and [\ref{APZ all valns on K(X)}, Theorem 5.1] that $K(b_{\nu_0}, c_\nu)v \subseteq K(b_{\nu_0}, c_\mu)v$ for all $\nu<\mu<\l$. Further, 
\[ K(b_{\nu_0}, X)v = \Union\limits_{\nu<\l} K(b_{\nu_0}, c_\nu)v. \]
As a consequence, 
\[ K(b_{\nu_0}, X)v = \Union\limits_{\nu_0 < \nu<\l} K(b_{\nu_0}, c_\nu)v. \]
Take some $\nu>\nu_0$. The fact that $\{\g_\mu\}_{\mu<\l}$ is cofinal in $v(X-\overline{K})$ implies that there exists some $\mu<\l$ such that
\[ \b_\nu < \g_\mu. \]
Thus $v(X-c_\nu) < v(X-a_\mu)$. It follows from the triangle inequality that $v(a_\mu-c_\nu) = \b_\nu$ and hence $(a_\mu, \b_\nu)$ is a pair of definition for $v_{c_\nu, \b_\nu}$ over $K(b_{\nu_0})$. Applying Lemma \ref{Lemma inclusion of value grps and res fields} to the extension $(K(b_{\nu_0}, X) | K(b_{\nu_0}), v_{c_\nu, \b_\nu})$, we obtain that 
\[ K(b_{\nu_0}, c_\nu)v \subseteq K(b_{\nu_0}, a_\mu)v. \]
Observe that $\g_\mu> \b_\nu\geq \g_\nu$ and hence $\mu>\nu$. Then the choice $\nu>\nu_0$ implies that $\mu>\nu_0$. As a consequence, $v(a_\mu - a_{\nu_0}) = \g_{\nu_0}$. It follows that $(a_\mu, \g_{\nu_0})$ is a pair of definition for $v_{a_{\nu_0}, \g_{\nu_0}}$ over $K$. Applying Lemma \ref{Lemma equality of value grps and res fields} to the extension $(K(X)|K, v_{a_{\nu_0}, \g_{\nu_0}})$, we obtain that
\[ K(b_{\nu_0}, a_\mu)v = K(a_\mu)v. \]  
We have thus observed that for each $\nu>\nu_0$, there exists some $\mu>\nu$ such that
\[ K(b_{\nu_0}, c_\nu)v \subseteq K(a_\mu)v. \]
As a consequence, 
\[ \Union\limits_{\nu_0 < \nu<\l}K(b_{\nu_0}, c_\nu)v \subseteq \Union\limits_{\nu_0 < \nu<\l} K(a_\nu)v.  \]
It follows from [\ref{APZ all valns on K(X)}, Theorem 2.3] and [\ref{APZ all valns on K(X)}, Theorem 5.1] that 
\[ K(X)v = \Union\limits_{\nu<\l} K(a_\nu)v = \Union\limits_{\nu_0 < \nu<\l} K(a_\nu)v. \]
The fact that $K(X)v \subseteq K(b_{\nu_0},X)v$ now yields that
\begin{equation}\label{eqn K(b,X)v = K(X)v}
	K(b_{\nu_0}, X)v = \Union\limits_{\nu_0 < \nu<\l} K(b_{\nu_0}, c_\nu)v = \Union\limits_{\nu_0 < \nu<\l} K(a_\nu)v = K(X)v.
\end{equation} 
It follows from (\ref{eqn vK(b,X) = vK(X)}) and (\ref{eqn K(b,X)v = K(X)v}) that $(K(b_{\nu_0}, X)|K(X),v)$ is an immediate extension. The fact that the henselization is an immediate extension then implies that $(K(b_{\nu_0}, X)^h | K(X)^h, v)$ is also immediate. Now, $K(b_{\nu_0})$ being contained in $K^r$ implies that $K(b_{\nu_0}, X) \subseteq K(X)^r$ [\ref{Dutta Kuh abhyankars lemma}, Theorem 3(1)]. Consequently, $(K(b_{\nu_0}, X)^h | K(X)^h, v)$ is a defectless extension. The fact that $(K(b_{\nu_0}, X)^h | K(X)^h, v)$ is immediate now implies that $K(b_{\nu_0}, X)^h = K(X)^h$, that is, $b_{\nu_0} \in K(X)^h$. As a consequence, 
\[ b_{\nu_0} \in \overline{K}\sect K(X)^h = IC(K(X)|K,v). \]
Thus $b_\nu \in IC(K(X)|K,v)$ for all $\nu<\l$. Consequently, $K(b_\nu\mid \nu<\l) \subseteq IC(K(X)|K,v)$. The fact that $IC(K(X)|K,v)$ is a henselian valued field now implies that
\[ K(b_\nu\mid \nu<\l)^h \subseteq IC(K(X)|K,v). \]
\end{proof}

A valued field $(K,v)$ is said to be a \textbf{tame field} if $(K,v)$ is henselian and $\overline{K} = K^r$. The following corollary is now immediate:

\begin{Corollary}
	Assume that $(K,v)$ is a tame valued field and $(K(X)|K,v)$ is a valuation algebraic extension. Fix an extension of $v$ to $\overline{K(X)}$. Take $\{\g_\nu\}_{\nu<\l}$ and $\{a_\nu\}_{\nu<\l}$ as in Section \ref{Sec valn alg extns}. Then,
	\[ IC(K(X)|K,v) = K(a_\nu\mid \nu<\l). \]
\end{Corollary}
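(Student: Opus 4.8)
The plan is to read off the Corollary directly from Theorem \ref{Thm IC for valn alg} by feeding in the defining property of a tame field, namely $K^r = \overline{K}$, which collapses the upper and lower bounds of that theorem into a single value.

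First I would note that for each $\nu<\l$ the element $a_\nu$ lies in $\overline{K}$, so $K(a_\nu) \subseteq \overline{K} = K^r$ and hence $K(a_\nu) \sect K^r = K(a_\nu)$. Thus, in the notation of Theorem \ref{Thm IC for valn alg}, one may take $b_\nu = a_\nu$ for every $\nu<\l$, and the two-sided inclusion provided there becomes
\[ K(a_\nu \mid \nu<\l)^h \subseteq IC(K(X)|K,v) \subseteq K(a_\nu \mid \nu<\l)^h, \]
which forces $IC(K(X)|K,v) = K(a_\nu \mid \nu<\l)^h$.

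It then remains only to check that $L := K(a_\nu \mid \nu<\l)$ is henselian, so that $L^h = L$. Each $a_\nu$ is algebraic over $K$, so $L$ is an algebraic — and in general infinite — extension of the henselian valued field $(K,v)$; as recalled in Section \ref{Sec ramification theory}, an algebraic extension of a henselian valued field is again henselian (concretely, $v$ extends uniquely to $\overline{L} = \overline{K}$, which is the defining property of henselianity of $L$). Hence $L^h = L$, and combining this with the previous step yields $IC(K(X)|K,v) = K(a_\nu \mid \nu<\l)$.

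I do not expect any genuine obstacle here: the entire substance sits in Theorem \ref{Thm IC for valn alg}, and the only point requiring a moment's care is that $L$ need not be a finite extension of $K$, which is nonetheless covered by the statement that algebraic extensions of henselian fields are henselian.
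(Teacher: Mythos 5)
Your argument is correct and is exactly the intended derivation: the paper leaves the corollary as an immediate consequence of Theorem \ref{Thm IC for valn alg}, and your proof just spells out the two points involved, namely that tameness gives $K(a_\nu)\sect K^r=K(a_\nu)$ so one may take $b_\nu=a_\nu$, and that $K(a_\nu\mid\nu<\l)$ is henselian (being an algebraic extension of the henselian field $K$), so it equals its own henselization.
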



\section{Key Polynomials} \label{Sec key pols}
Take a polynomial $f(X)\in K[X]$. Following [\ref{Novacoski key poly and min pairs}], we define
\[ \d(f) := \max\{ v(X-a) \mid a\in\overline{K} \text{ and } f(a)=0 \}. \]
A root $a$ of $f$ such that $\d(f) = v(X-a)$ is said to be a \textbf{maximal root} of $f$. A monic polynomial $Q(X)\in K[X]$ is said to be a \textbf{key polynomial for $v$ over $K$} if for any $f(X) \in K[X]$, we have
\[ \deg f < \deg Q \Longrightarrow \d(f) < \d(Q). \]
Take any $f(X) \in K[X]$ and a monic $Q(X) \in K[X]$. Then we have a unique expansion 
\[ f = f_0 + f_1 Q + \dotsc f_r Q^r, \]
where $f_i (X) \in K[X]$ such that $\deg f_i < \deg Q$. Consider the map $v_Q: K[X] \longrightarrow vK(X)$ given by
\[ v_Q f:= \min \{ vf_i + ivQ \}, \]
and extend it to $K(X)$ by defining $v_Q(f/g):= v_Q f - v_Q g$. A sufficient condition for $v_Q$ to be a valuation on $K(X)$ is that $Q$ be a key polynomial for $v$ over $K$ [\ref{Nova Spiva key pol pseudo convergent}, Proposition 2.6], but it is not a necessary condition [\ref{Novacoski key poly and min pairs}, Proposition 2.3].

\begin{Lemma}\label{Lemma vf leq v (a,g) f}
	Take $a\in\overline{K}$ and $f(X)\in K[X]$. Set $\g:= v(X-a)$. Then,
	\begin{align*}
		vf &> v_{a,\g}f \text{ if and only if } \d(f)> \g,\\
		vf &= v_{a,\g}f \text{ if and only if } \d(f)\leq \g.
	\end{align*}
\end{Lemma}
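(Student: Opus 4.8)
The plan is to start from the observation that one always has $vf \geq v_{a,\g}f$: writing $f(X) = \sum_i c_i(X-a)^i$ with $c_i \in \overline{K}$ (legitimate since $a\in\overline{K}$ and $f\in K[X]$), the ultrametric inequality together with $v(X-a) = \g$ gives $vf \geq \min_i(vc_i + i\g) = v_{a,\g}f$. Hence for every $f$ exactly one of the two displayed situations occurs, so it suffices to prove the first equivalence $vf > v_{a,\g}f \Leftrightarrow \d(f) > \g$; the second then follows formally.

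For the first equivalence I would pass to the factorization of $f$ over $\overline{K}$. Write $f(X) = c\prod_{j=1}^n(X-a_j)$ with $c\in K$ the leading coefficient and $a_1,\dots,a_n\in\overline{K}$ the roots of $f$ (if $f$ is constant there is nothing to prove, with the convention $\d(f) = -\infty$). Since $v_{a,\g}$ agrees with $v$ on $\overline{K}$ one has $v_{a,\g}c = vc$, and the expansion $X - a_j = (X-a) + (a - a_j)$ gives $v_{a,\g}(X-a_j) = \min\{\g, v(a-a_j)\}$. Multiplicativity of the two valuations then yields
\[ vf - v_{a,\g}f \;=\; \sum_{j=1}^n \big( v(X-a_j) - \min\{\g, v(a-a_j)\} \big), \]
a sum of non-negative terms; hence $vf > v_{a,\g}f$ holds precisely when $v(X-a_j) > \min\{\g, v(a-a_j)\}$ for some $j$.

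It then remains to identify this last condition with $\d(f) = \max_j v(X-a_j) > \g$, and this is where the hypothesis $\g = v(X-a)$ is used. If some root satisfies $v(X-a_j) > \g$, then from $a - a_j = (a-X) + (X-a_j)$, with $v(a-X) = \g < v(X-a_j)$, the ultrametric inequality forces $v(a-a_j) = \g$, so $v(X-a_j) > \g = \min\{\g, v(a-a_j)\}$; thus $\d(f) > \g$ implies $vf > v_{a,\g}f$. Conversely, if $v(X-a_j) > \min\{\g, v(a-a_j)\}$ for some $j$, then the ultrametric inequality applied to $X - a_j = (X-a)+(a-a_j)$ can fail to be an equality only if $\g = v(a-a_j)$, in which case $v(X-a_j) > \g$ and so $\d(f) \geq v(X-a_j) > \g$. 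There is no serious obstacle here; the one point to handle carefully is the repeated use of the ultrametric inequality to pass between the quantities $v(X-a_j)$, $v(a-a_j)$ and $\g$, which is precisely the mechanism linking the genuine valuation $v$ with the auxiliary valuation $v_{a,\g}$.
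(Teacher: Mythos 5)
Your proof is correct and follows essentially the same route as the paper: factor $f$ over $\overline{K}$, compute $v_{a,\g}(X-a_j)=\min\{\g,v(a-a_j)\}$, and use the ultrametric case analysis to show that a factor contributes a strict inequality precisely when $v(X-a_j)>\g$. The only difference is your (harmless, slightly redundant) preliminary coefficient-expansion argument for $vf\geq v_{a,\g}f$, which the factorization already gives.
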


\begin{proof}
	Write $f(X) = z \prod_{i=1}^{n} (X-z_i)$ where $z,z_i \in\overline{K}$. By definition, $v_{a,\g} (X-z_i) = \min \{ \g, v(a - z_i) \}$. It then follows from the triangle inequality that $v_{a,\g}(X-z_i) \leq v(X-z_i)$. As a consequence, 
	\[ v_{a,\g}f \leq vf. \]
	Further, $v_{a,\g}f < vf$ if and only if $v_{a,\g} (X-z_i) < v(X-z_i)$ for some $i$, which holds if and only if $v(X-z_i) > \g = v(a-z_i)$. The lemma now follows.
\end{proof}

\begin{Lemma}\label{Lemma vQ key pol}
	Take a key polynomial $Q(X)$ for $v$ over $K$ and a maximal root $a$ of $Q$. Then,
	\[ v_{a, \d(Q)} |_{K(X)} = v_Q. \]
\end{Lemma}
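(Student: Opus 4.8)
The plan is to show that the two valuations $v_{a,\d(Q)}|_{K(X)}$ and $v_Q$ agree on every polynomial $f(X) \in K[X]$; since both are valuations on $K(X)$ (the first by the theory of minimal pairs, the second because $Q$ is a key polynomial, via \cite{Nova Spiva key pol pseudo convergent}, Proposition 2.6), agreement on $K[X]$ gives agreement on $K(X)$. Write $\g := \d(Q) = v(X-a)$. First I would record the two basic inequalities: on the one hand Lemma \ref{Lemma vf leq v (a,g) f} gives $v_{a,\g} f \leq vf$ for all $f$, and more precisely equality holds exactly when $\d(f) \leq \g$; on the other hand, for the $Q$-expansion $f = \sum_i f_i Q^i$ with $\deg f_i < \deg Q$, one has $\d(f_i) < \d(Q) = \g$ by the defining property of a key polynomial, hence $v f_i = v_{a,\g} f_i$ for every $i$.

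Next I would compute $v_{a,\g}$ on the expansion. Since $v_{a,\g}$ is a valuation extending $v$, and $v_{a,\g} Q = \g \cdot 1 + \dots$ — more carefully, $v_{a,\g}$ applied to $Q$: writing $Q(X) = \prod (X - z_j)$ over its roots, $v_{a,\g} Q = \sum_j \min\{\g, v(a - z_j)\}$, and since $a$ is a maximal root, $v(a - z_j) \le \g$ for all $j$ except possibly the factor $(X-a)$ itself which contributes $\g$; in any case $v_{a,\g} Q = vQ$ by Lemma \ref{Lemma vf leq v (a,g) f} applied to $Q$ (as $\d(Q) = \g$, we are in the equality case). So $v_{a,\g} Q = v Q = v_Q Q$. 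Therefore
\[
v_{a,\g} f = v_{a,\g}\Bigl(\sum_i f_i Q^i\Bigr) \geq \min_i \bigl(v_{a,\g} f_i + i\, v_{a,\g} Q\bigr) = \min_i \bigl(v f_i + i\, vQ\bigr) = v_Q f,
\]
where the first inequality is the ultrametric inequality and the middle equality uses the two facts from the previous paragraph. For the reverse inequality $v_{a,\g} f \le v_Q f$, I would argue that the minimum defining $v_Q f$ is attained uniquely, or rather that no cancellation occurs among the terms $f_i Q^i$ under $v_{a,\g}$: the point is that $v_{a,\g}(f_i Q^i) = v f_i + i\g$, and these values for distinct $i$ — wait, they need not be distinct. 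The cleaner route is to recall that $v_Q$ is the unique valuation with $v_Q Q = vQ$ and $v_Q g = vg$ for $\deg g < \deg Q$ that is "monomial" in $Q$; alternatively, invoke that $v_{a,\g}$ restricted to polynomials of degree $< \deg Q$ coincides with $v$ (the equality case of Lemma \ref{Lemma vf leq v (a,g) f}, since such $g$ have $\d(g) < \g$), and $v_{a,\g} Q = vQ$, so $v_{a,\g}|_{K(X)}$ satisfies the defining properties that characterize $v_Q$ as the $Q$-truncation; hence they are equal.

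The main obstacle I anticipate is handling the reverse inequality carefully — ruling out that under $v_{a,\g}$ there is extra cancellation making $v_{a,\g} f$ strictly larger than $v_Q f$. I would resolve this by using the structural characterization: $v_{a,\g}$ is a valuation, it agrees with $v$ (hence with $v_Q$) on all polynomials of degree less than $\deg Q$, and $v_{a,\g} Q = v_Q Q$; by the uniqueness part of \cite{Nova Spiva key pol pseudo convergent}, Proposition 2.6 (or by a direct induction on $\deg f$, dividing $f$ by $Q$), any valuation with these properties equals $v_Q$. A secondary point to verify is that $v_{a,\g}$ is genuinely the relevant valuation, i.e.\ that $\g = \d(Q)$ really is a legitimate value for the minimal-pair construction and that $(a,\g)$ need not itself be a \emph{minimal} pair — it need not be, but Lemma \ref{Lemma vf leq v (a,g) f} and the definition of $v_{a,\g}$ from Section \ref{Sec min pairs} only require $a \in \overline{K}$ and $\g \in v\overline{K}(X)$, so this causes no difficulty.
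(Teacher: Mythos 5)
Your first half is fine: using Lemma \ref{Lemma vf leq v (a,g) f} to get $v_{a,\g}f_i = vf_i$ for the $Q$-expansion coefficients (since $\d(f_i)<\d(Q)$ by the key polynomial property) and $v_{a,\g}Q = vQ$, the ultrametric inequality indeed gives $v_{a,\g}f \geq v_Qf$. The gap is the reverse inequality, which is where all the content of the lemma lies, and your proposed resolution does not work. You claim that $v_Q$ is characterized among valuations on $K(X)$ extending $v$ by the two properties of agreeing with $v$ on polynomials of degree less than $\deg Q$ and assigning $Q$ the value $vQ$. That characterization is false: the valuation $v$ itself has both properties, yet $v \neq v_Q$ in general --- by Lemma \ref{Lemma vf leq v (a,g) f} together with your own first inequality, $vf > v_{a,\g}f \geq v_Qf$ for every $f$ with $\d(f) > \d(Q)$, and such $f$ exist in the setting of this paper (take the minimal polynomial of any $a_\mu$ with $\g_\mu > \d(Q)$; recall $v(X-\overline{K})$ has no maximum). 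Moreover, [\ref{Nova Spiva key pol pseudo convergent}, Proposition 2.6] only asserts that $v_Q$ \emph{is} a valuation when $Q$ is a key polynomial; it contains no uniqueness statement of the kind you invoke. So the ``no cancellation'' issue you correctly flagged --- that $v_{a,\g}\bigl(\sum_i f_iQ^i\bigr)$ could a priori exceed $\min_i(vf_i + i\,vQ)$ --- is left unresolved, and a suggested ``direct induction dividing by $Q$'' runs into exactly the same cancellation problem at each step.

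The paper takes a different and shorter route that sidesteps this computation: it first verifies that $(a,\d(Q))$ is a \emph{minimal} pair of definition for $v_{a,\d(Q)}$ over $K$ (if $v(a-a')\geq\d(Q)$, then $\d(Q')\geq\d(Q)$ for the minimal polynomial $Q'$ of $a'$, whence $\deg Q'\geq\deg Q$ because $Q$ is a key polynomial), and then cites [\ref{Novacoski key poly and min pairs}, Theorem 1.1], which is precisely the statement $v_{a,\d(Q)}|_{K(X)} = v_Q$ in this situation. Your approach amounts to reproving that theorem, and its proof genuinely needs the minimal pairs machinery (which you never bring in --- you neither establish nor use minimality of $(a,\d(Q))$), not merely the two agreement properties. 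Either carry out that argument in full, or argue as the paper does via minimality and Novacoski's theorem.
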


\begin{proof}
	By definition, $(a,\d(Q))$ is a pair of definition for $v_{a,\d(Q)}$ over $K$. Take another pair of definition $(a^\prime, \d(Q))$. Then $v(a-a^\prime) \geq \d(Q)$ and hence $v(X-a^\prime) \geq \d(Q)$. Take the minimal polynomial $Q^\prime(X)$ of $a^\prime $ over $K$. The fact that $v(X-a^\prime) \geq \d(Q)$ implies that $\d(Q^\prime) \geq \d(Q)$. As a consequence of $Q(X)$ being a key polynomial for $v$ over $K$, we conclude that $\deg Q^\prime \geq \deg Q$, that is, $[K(a^\prime):K] \geq [K(a):K]$. It follows that $(a,\d(Q))$ is a minimal pair of definition for $v_{a,\d(Q)}$ over $K$. In light of [\ref{Novacoski key poly and min pairs}, Theorem 1.1], we conclude that $v_{a, \d(Q)} |_{K(X)} = v_Q$.
\end{proof}

\begin{Definition}
	A family $\L$ of key polynomials for $v$ over $K$ is said to form a \textbf{complete sequence of key polynomials for $v$ over $K$} if it satisfies the following conditions:
	\sn (CSKP1) $\d(Q) \neq \d(Q^\prime)$ for $Q, Q^\prime \in\L$ with $Q\neq Q^\prime$,
	\n (CSKP2) $\L$ is well-ordered with respect to the ordering given by $Q<Q^\prime$ if and only if $\d(Q) < \d(Q^\prime)$,
	\n (CSKP3) for any $f(X)\in K[X]$, there exists some $Q\in\L$ such that $vf = v_Qf$.
\end{Definition}

\begin{Theorem}\label{Thm key pols valn alg}
	Assume that $(K(X)|K,v)$ is a valuation algebraic extension. Take $\{\g_\nu\}_{\nu<\l}$ and $\{a_\nu\}_{\nu<\l}$ as in Section \ref{Sec valn alg extns}. For each $\nu<\l$, take the minimal polynomial $Q_\nu(X)$ of $a_\nu$ over $K$. Then $\{Q_\nu\}_{\nu<\l}$ forms a complete sequence of key polynomials for $v$ over $K$.
\end{Theorem}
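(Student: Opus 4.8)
The plan is to establish, in order: that each $Q_\nu$ is a key polynomial for $v$ over $K$ with $\d(Q_\nu)=\g_\nu$; then the bookkeeping conditions (CSKP1) and (CSKP2); and finally the completeness condition (CSKP3), which is where the cofinality of $\{\g_\nu\}_{\nu<\l}$ in $v(X-\overline{K})$ does the real work, via the two lemmas in Section \ref{Sec key pols}.

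First I would observe that the roots of $Q_\nu$ in $\overline{K}$ are precisely the $K$-conjugates $\s a_\nu$ of $a_\nu$, so the defining inequality $v(X-a_\nu)=\g_\nu\geq v(X-\s a_\nu)$ from Section \ref{Sec valn alg extns} says exactly that $a_\nu$ is a maximal root of $Q_\nu$ and that $\d(Q_\nu)=\g_\nu$. To see that $Q_\nu$ is a key polynomial, I would take $f(X)\in K[X]$ with $\deg f<\deg Q_\nu$ and argue by contradiction: if $\d(f)\geq\g_\nu$, then $f$ has a root $a'\in\overline{K}$ with $v(X-a')\geq\g_\nu=v(X-a_\nu)$, so the triangle inequality gives $v(a_\nu-a')\geq\g_\nu$, and by [\ref{AP sur une classe}, Proposition 3] the pair $(a',\g_\nu)$ is a pair of definition for $v_{a_\nu,\g_\nu}$ over $K$. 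Condition (MP2) for the minimal pair $(a_\nu,\g_\nu)$ then forces $[K(a'):K]\geq[K(a_\nu):K]=\deg Q_\nu$, which contradicts $[K(a'):K]\leq\deg f<\deg Q_\nu$. Hence $\d(f)<\g_\nu=\d(Q_\nu)$, so $Q_\nu$ is a key polynomial; in particular $v_{Q_\nu}$ is a valuation on $K(X)$ by [\ref{Nova Spiva key pol pseudo convergent}, Proposition 2.6].

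Conditions (CSKP1) and (CSKP2) should then be immediate: by Section \ref{Sec valn alg extns} the assignment $\nu\mapsto\g_\nu=\d(Q_\nu)$ is the increasing enumeration of the well-ordered cofinal subset $\{\g_\nu\}_{\nu<\l}$ of $v(X-\overline{K})$, so the values $\d(Q_\nu)$ are pairwise distinct and the ordering $Q_\nu<Q_\mu\Longleftrightarrow\d(Q_\nu)<\d(Q_\mu)$ coincides with the well-order on the index set. For (CSKP3), I would fix $f(X)\in K[X]$; if $f$ is constant the equality $vf=v_{Q_\nu}f$ is trivial since $v_{Q_\nu}$ agrees with $v$ on $K$, so I may assume $f$ non-constant, whence $\d(f)=v(X-a_0)$ for some root $a_0$ of $f$ and thus $\d(f)\in v(X-\overline{K})$. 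Cofinality then provides $\nu<\l$ with $\g_\nu\geq\d(f)$, so Lemma \ref{Lemma vf leq v (a,g) f} gives $vf=v_{a_\nu,\g_\nu}f$; and since $a_\nu$ is a maximal root of $Q_\nu$ with $\d(Q_\nu)=\g_\nu$, Lemma \ref{Lemma vQ key pol} gives $v_{a_\nu,\g_\nu}|_{K(X)}=v_{Q_\nu}$, so $vf=v_{Q_\nu}f$, as required.

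The main obstacle is Step 1 — reading the minimality condition (MP2) for the pair $(a_\nu,\g_\nu)$ as precisely the key-polynomial condition for $Q_\nu$; once that dictionary is in place, the remainder is the routine observation that $\d(f)$ always lands in $v(X-\overline{K})$, so cofinality of $\{\g_\nu\}_{\nu<\l}$ applies. Minor points to be careful about in (CSKP3) are the degenerate case of constant $f$ and the fact, recorded above, that $v_{Q_\nu}$ is genuinely a valuation, so that $vf=v_{Q_\nu}f$ is an honest comparison of valuations.
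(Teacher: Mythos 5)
Your proposal is correct and follows essentially the same route as the paper: the minimality condition (MP2) for $(a_\nu,\g_\nu)$ yields the key-polynomial property of $Q_\nu$ (you phrase it contrapositively, the paper directly), and completeness follows from cofinality of $\{\g_\nu\}_{\nu<\l}$ together with Lemmas \ref{Lemma vf leq v (a,g) f} and \ref{Lemma vQ key pol}. Your explicit handling of constant $f$ and of the fact that $\d(f)\in v(X-\overline{K})$ is a minor tidying of details the paper leaves implicit.
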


\begin{proof}
	Observe that by our construction, $a_\nu$ is a maximal root of $Q_\nu$ for all $\nu<\l$. Consequently, 
	\[ \d(Q_\nu) = \g_\nu < \g_\mu = \d(Q_\mu) \text{ for all } \nu<\mu<\l. \]
	
	\pars Take some $\nu<\l$. Assume that $\d(g) \geq \d(Q_\nu)$ for some $g(X) \in K[X]$. Then there exists a root $a$ of $g$ such that $v(X-a) \geq \d(Q_\nu)$. Consequently,  $v(X-a) \geq v(X-a_\nu)$ and hence $v(a-a_\nu) \geq v(X-a_\nu) = \g_\nu$. It follows that $(a, \g_\nu)$ is a pair of definition for $v_{a_\nu,\g_\nu}$ over $K$. The minimality of $(a_\nu,\g_\nu)$ then implies that $[K(a):K] \geq [K(a_\nu):K]$, that is, $\deg g \geq \deg Q_\nu$. It follows that $Q_\nu(X)$ is a key polynomial for $v$ over $K$ for all $\nu<\l$.
	
	\pars We now take some $f(X) \in K[X]$ and a maximal root $a$ of $f$. The fact that $\{\g_\nu\}_{\nu<\l}$ is cofinal in $v(X-\overline{K})$ implies that there exists some $\nu<\l$ such that $\d(f) \leq \g_\nu = v(X-a_\nu) = \d(Q_\nu)$. It follows from Lemma \ref{Lemma vf leq v (a,g) f} that $vf = v_{a_\nu, \d(Q_\nu)} f$. In light of Lemma \ref{Lemma vQ key pol}, we conclude that $vf = v_{Q_\nu}f$. 
\end{proof}


\begin{thebibliography}{1000000000}
		\bibitem{AP88} V. Alexandru and A. Zaharescu, Sur une classe de prolongements \`{a} $K(x)$ d'une valuation sur une corp $K$, Rev. Roumaine Math. Pures Appl., 5 (1988), 393--400. \label{AP sur une classe}  	
		\bibitem{APZ1} V. Alexandru, N. Popescu and A. Zaharescu, A theorem of characterization of residual transcendental extensions of a valuation, J. Math. Kyoto University, 28 (1988), 579--592. \label{APZ characterization of residual trans extns}
		\bibitem{APZ2} V. Alexandru, N. Popescu and A. Zaharescu, Minimal pairs of definition of a residual transcendental extension of a valuation, J. Math. Kyoto University, 30 (1990), 207--225. \label{APZ2 minimal pairs}
		\bibitem{APZ} V. Alexandru, N. Popescu and A. Zaharescu, All valuations on $K(X)$, J. Math. Kyoto University, 30--2 (1990), 281--296. \label{APZ all valns on K(X)} 
		\bibitem{Bourbaki} N. Bourbaki, Commutative Algebra, Hermann, Paris (1972). \label{Bourbaki}
		\bibitem{Du21} A. Dutta, Minimal pairs, inertia degrees, ramification degrees and implicit constant fields, https://arxiv.org/abs/2111.13641, Submitted. \label{Dutta min pairs inertia ram deg impl const fields}
		\bibitem{Du21} A. Dutta, Minimal pairs, minimal fields and implicit constant fields, Journal of Algebra, 588 (2021), 479--514. \label{Dutta min fields implicit const fields}
		\bibitem{DuKu} A. Dutta and F.-V. Kuhlmann, Eliminating tame ramification: generalizations of Abhyankar's lemma, Pacific Journal of Mathematics, 307(1) (2020), 121--136. \label{Dutta Kuh abhyankars lemma}
			\bibitem{Kap-42} I. Kaplansky, Maximal fields with valuations I, Duke Math. Journal 9 (1942), 303--321, \label{Kaplansky}
		\bibitem{FVK-04} F.-V. Kuhlmann, Value groups, residue fields and bad places of rational function fields, Trans. Amer. Math. Soc., 356 (2004), 4559--4600. \label{Kuh value groups residue fields rational fn fields}
		\bibitem{SM36} S. MacLane, A construction for absolute values in polynomial rings, Trans. Amer. Math. Soc., 40 (1936), 363--395. \label{MacLane key pols}
		\bibitem{JN-19} J. Novacoski, Key polynomials and minimal pairs, Journal of Algebra, 523 (2019), 1--14. \label{Novacoski key poly and min pairs}
		\bibitem{JN-MS-18}	J. Novacoski and M. Spivakovsky, Key polynomials and pseudo-convergent sequences, Journal of Algebra, 495 (2018), 199--219. \label{Nova Spiva key pol pseudo convergent}
		\bibitem{MV07} M. Vaqui\'{e}, Extension d'une valuation, Trans. Amer. Math. Soc., 359 no. 7 (2007), 3439--3481. \label{Vaquie key pols} 
	\end{thebibliography}
\end{document}